\newtheorem{theorem}{Theorem}
\newtheorem{lemma}[theorem]{Lemma}
\theoremstyle{remark}
\theoremstyle{definition}
\newcommand{\seqnum}[1]{\href{http://oeis.org/#1}{\underline{#1}}}
\title{Improved estimates for the number of privileged words}
\author{Jeremy Nicholson and Narad Rampersad \thanks{The first author
    is supported by an NSERC USRA, the second by an NSERC Discovery
    Grant.} \\
Department of Mathematics and Statistics \\
University of Winnipeg \\
515 Portage Avenue \\
Winnipeg, Manitoba R3B 2E9 (Canada)\\
\texttt{n.rampersad@uwinnipeg.ca, jnich998@hotmail.com}}
\begin{document}

\maketitle

\begin{abstract}
In combinatorics on words, a word $w$ of length $n$ over an alphabet of 
size $q$ is said to be \emph{privileged} if $n \leq 1$ or if $n \geq 2$ and $w$ has a 
privileged border that occurs exactly twice in $w$. Forsyth, 
Jayakumar and Shallit proved that there exist at least 
$2^{n-5}/n^2$ privileged binary words of length $n$.
Using the work of Guibas and Odlyzko, we prove that there are
constants $c$ and $n_0$ such that for 
$n\geq n_0$, there are at least $\frac{cq^n}{n(\log_q n)^2}$
privileged words of length $n$ over an alphabet of size $q$.
Thus, for $n$ sufficiently large,
we improve the earlier bound set by Forsyth,
Jayakumar and Shallit and generalize for all $q$.
\end{abstract}

\maketitle

\section{Introduction}
The class of \emph{privileged words} was recently introduced by
Kellendonk, Lenz, and Savinien \cite{KLS13} and studied further by
Peltom\"aki \cite{Pel13}.  This class of words has been studied due to its
connection with palindromes and the class of so-called ``rich
words''.  A word $w$ is \emph{privileged} if either
\begin{itemize}
\item $w$ is a single letter, or
\item $w$ has a privileged border that occurs exactly twice in $w$.
\end{itemize}
Recall that a \emph{border} of a word $w$ is a non-empty word that is
both a prefix and a suffix of $w$.

The first few privileged words (up to length $4$) over the alphabet
$\{0,1\}$ are
\[
\epsilon, 0, 1, 00, 11, 000, 010, 101, 111, 0000, 0110, 1001, 1111.
\]

Forsyth, Jayakumar, Peltom\"aki, and Shallit \cite{FJS13} looked at the problem of
enumerating privileged words over a binary alphabet.  The number of
such words of length $n$ is given by sequence \seqnum{A231208} of the Online
Encyclopedia of Integer Sequences.  This sequence begins
\[
1, 2, 2, 4, 4, 8, 8, 16, 20, 40, 60, 108, \ldots
\]
Forsyth et al.\ proved that
there exist at least $2^{n-5}/n^2$ privileged binary words of length
$n$.  In their paper they sketch a method for potentially improving
this estimate.  In the present paper we apply some results of Guibas
and Odlyzko \cite{GO78} on the size of prefix-synchronized codes to
carry out this method.  We are thus able to obtain the following
asymptotic improvement to the result of Forsyth, Jayakumar, and
Shallit, and also generalize the result to arbitrary alphabets.

\begin{theorem}\label{main}
Let $q \geq 2$.
There exist constants $c$ and $n_0$ such that for $n \geq n_0$, there are at least
$\frac{cq^n}{n(\log_q n)^2}$
privileged words of length $n$ over an alphabet of size $q$.
\end{theorem}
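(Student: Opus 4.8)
Write $P(n)$ for the number of privileged words of length $n$ over a $q$-letter alphabet. The plan is to produce privileged words of length $n$ in the shape $pvp$, where $p$ is a short privileged word and $v$ is chosen so that $p$ occurs exactly twice in $pvp$; then $p$ is a privileged border of $pvp$ occurring exactly twice, so $pvp$ is privileged. This is the method sketched by Forsyth, Jayakumar and Shallit, the new ingredient being a good count of the admissible ``middles'' $v$. I would fix the prefix length at $\mu=\lceil\log_q(4n)\rceil$, so that $4n\le q^{\mu}<4qn$ and $\mu=\log_q n+O(1)$, and take $|v|=n-2\mu$. For a fixed $\mu$ the map $(p,v)\mapsto pvp$ is injective, since from $pvp$ one recovers $p$ as the length-$\mu$ prefix and $v$ as the remaining block; hence $P(n)$ is at least the number of admissible pairs $(p,v)$, with no overcounting. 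It therefore suffices to lower bound (i) the number of eligible privileged words $p$ of length $\mu$, and (ii) for each such $p$, the number of words $v$ of length $n-2\mu$ for which $p$ occurs exactly twice in $pvp$.

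For step (ii) I would appeal to the Guibas--Odlyzko theory of prefix-synchronized codes: the words $pv$ for which $p$ occurs in $pvp$ only as a prefix and as a suffix form a prefix-synchronized code with synchronizing word $p$, and their estimates bound the size of such a code in terms of the autocorrelation of $p$. Heuristically, an unwanted occurrence of $p$ in $pvp$ is either internal to $v$ --- at most $\ell$ positions, each pinning down $\mu$ letters of $v$, for a total cost $O(\ell q^{\ell-\mu})$ --- or it straddles one of the two copies of $p$, which can only happen at the $O(\mu)$ offsets allowed by the self-overlaps of $p$, for a total cost $O(q^{\ell-p_0})$, where $p_0$ is the shortest period of $p$. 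Thus, provided $p_0\ge P_0$ for a suitable constant $P_0=P_0(q)$ and provided $\ell\le q^{\mu}/4$, the number of admissible $v$ of length $\ell$ is at least $\tfrac12 q^{\ell}$. Since $n-2\mu\le n\le q^{\mu}/4$, this gives at least $\tfrac12 q^{\,n-2\mu}$ admissible middles for every privileged $p$ of length $\mu$ whose shortest period is at least $P_0$.

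For step (i) I would use the estimate of Forsyth, Jayakumar and Shallit, with the minor modification needed for a $q$-letter alphabet: the number of privileged words of length $\mu$ is at least $c_0 q^{\mu}/\mu^2$ for a constant $c_0=c_0(q)>0$ and all large $\mu$. One counts words $10^a1\,x\,10^a1$ of length $\mu$, where $a=\lceil\log_q(2\mu)\rceil$ and $x$ ranges over words of the appropriate length beginning and ending with $1$ and containing no factor $0^a$; the block $10^a1$ is privileged (its border $1$ occurs exactly twice), and since the only runs of $a$ zeros lie inside the two blocks it occurs exactly twice in the whole word, which is therefore privileged. A union bound shows a positive proportion of the blocks $x$ are admissible, giving $\Theta(q^{\mu-2a})=\Theta(q^{\mu}/\mu^2)$ distinct words. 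Discarding the at most $q^{P_0}$ words of length $\mu$ with shortest period below $P_0$ still leaves at least $\tfrac{c_0}{2}q^{\mu}/\mu^2$ privileged words eligible for step (ii).

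Multiplying, $P(n)\ge\tfrac{c_0}{2}\cdot\tfrac{q^{\mu}}{\mu^2}\cdot\tfrac12 q^{\,n-2\mu}=\tfrac{c_0}{4}\cdot\tfrac{q^{\,n-\mu}}{\mu^2}$; since $q^{\mu}<4qn$ and $\mu<2\log_q n$ for $n$ large, this is at least $c\,q^n/(n(\log_q n)^2)$ for a suitable $c=c(q)>0$ and all $n\ge n_0(q)$, which is the claim. I expect the crux to be step (ii): one must check that the Guibas--Odlyzko lower bound for prefix-synchronized codes really does apply to a large family of privileged synchronizing words $p$ --- that is, that being privileged (hence necessarily bordered) remains compatible with having only harmless short self-overlaps --- and one must track the dependence of their estimate on $\mu$ carefully enough that the savings $q^{\mu}\asymp n$ and $\mu\asymp\log_q n$ produce exactly the stated powers. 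Supplying the $q$-letter version of the Forsyth--Jayakumar--Shallit bound is a secondary, routine matter.
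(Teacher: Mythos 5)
Your proposal is correct and reaches the stated bound, but the engine is different from the paper's. The decomposition itself is the same one the paper (following Forsyth--Jayakumar--Shallit) uses: count words $PvP$ in which a privileged prefix $P$ of length roughly $\log_q n$ occurs exactly twice, and multiply a lower bound for the number of such $P$ (of order $q^{\mu}/\mu^2$; this is the $q$-ary analogue of the FJS estimate, which you prove via the $10^a1\,x\,10^a1$ construction and which the paper invokes without spelling out) by a lower bound for the number of admissible middles $v$. Where the paper then imports Lemmas 3--6 of Guibas--Odlyzko --- the autocorrelation polynomial $f$, the root $\rho$ of $1+(z-q)f(z)$, and $G_P(N)=R_Q\rho^N+O((1.7)^N)$ --- and carries out a careful expansion to get $G_P(N)\geq dq^n/n^2$ uniformly over \emph{all} prefixes of the chosen length, you replace this entirely by a union bound: extra occurrences of $P$ interior to $v$ cost at most $\ell q^{\ell-\mu}\leq\frac14 q^{\ell}$ once $q^{\mu}\geq 4n$, and occurrences straddling either copy of $P$ force a period of $P$ and cost $O(q^{\ell-p_0})$, which is below $\frac14 q^{\ell}$ once the shortest period $p_0$ exceeds a constant; discarding the at most $q^{P_0}$ prefixes of length $\mu$ with a short period loses only $O(1)$ words. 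Note that this union bound, which you present as a heuristic preliminary to invoking Guibas--Odlyzko, is already a complete rigorous argument (for large $n$ one has $\ell\gg\mu$, so every extra occurrence either lies inside $v$ or overlaps exactly one copy of $P$ and hence forces a period), so the appeal to prefix-synchronized codes --- and your worry about whether those estimates apply to privileged, hence bordered, synchronizing words --- can simply be dropped; for the record, the paper's Lemma~\ref{G_lower_bound} shows the Guibas--Odlyzko asymptotics do apply to every prefix of the prescribed length, bordered or not, since the expansion only uses $q^{p-1}\leq f(q)\leq(q^p-1)/(q-1)$. What the paper's heavier route buys is sharper information (an asymptotic formula for each $G_P(N)$, with no period restriction); what your route buys is a more elementary, self-contained proof of the same $cq^n/(n(\log_q n)^2)$ bound, together with an explicit $q$-ary version of the short-word count.
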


The estimates given in this theorem derive from the asymptotic
analysis of maximal prefix-synchronized codes carried out by Guibas
and Odlyzko \cite{GO78}.  Given an alphabet of size $q$, a block
length $N$ and a prefix $P$ of length $p<N$, a
\emph{prefix-synchronized code} is a set of length-$N$ codewords with the
property that every codeword starts with a fixed prefix $P=a_1a_2
\cdots a_p$, and furthermore, for any codeword $a_1a_2 \cdots
a_pb_1b_2 \cdots b_{N-p}$, the prefix $P$ does not appear as a
factor of $a_2 \cdots a_pb_1 \cdots b_{N-p}a_1 \cdots a_{p-1}$.  In
other words, the border $a_1a_2 \cdots a_p$ of $a_1a_2 \cdots a_pb_1
\cdots b_{N-p} a_1a_2 \cdots a_p$ occurs exactly twice in this word.

Next, we define $G(N)=G_P(N)$ as the size of 
a maximal prefix-synchronized code with these parameters. In other words, $G_P(N)$ is the number of $q$-ary words
$a_1 \cdots a_{N+p}$ such that $a_k \cdots a_{k+p-1}=P$ for $k=1$ and
$k=N+1$, and for no other $k$ with $1 \leq k \leq N+1$. If we take $n=N+p$ where the prefix $P$ of length $p$ is a privileged word,
then $G_P(N)$ counts all words of length $n$ with a privileged border $P$ that occurs exactly twice in $w$. All these words are necessarily privileged words.
If we sum up $G_P(N)$ over all privileged $P$ of length $p$ for some $p$,
then we obtain a lower bound for the number of privileged words of
length $n$.

\section{Proof of Theorem~\ref{main}}

 To prove Theorem~\ref{main} we begin with some lemmas. In all the 
following lemmas and for the rest of this document, $q$ is the size of 
the alphabet and it will be a fixed integer $\geq2$, $p$ is the size of
the prefix $P$, and $Q=y_1y_2 \cdots y_p$ is the \emph{autocorrelation} of $P$,
defined as follows.  If $P = a_1\cdots a_p$, then for $i=1,\ldots,p$ we define
\[
y_i = \begin{cases} 1 & \text{if } a_{i+1}\cdots a_k = a_1\cdots
  a_{k-i}, \\ 0 & \text{otherwise.}\end{cases}
\]
We also define the polynomial
\[
f(z)=f_Q(z)=\sum_{i=1}^{p} y_iz^{p-i}.
\]

The next series of lemmas are Lemmas~3--6 of \cite{GO78}.

\begin{lemma}\label{rho_def}
If $p$ is sufficiently large, then $1+(z-q)f(z)$ has exactly one zero
$\rho$ that satisfies $|\rho| \geq 1.7$.
\end{lemma}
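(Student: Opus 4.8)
The plan is to apply Rouch\'e's theorem on the circle $|z|=1.7$. Since $y_1=1$ (all of $P$ is trivially both a prefix and a suffix of $P$), $f$ is monic of degree $p-1$, so $g(z):=1+(z-q)f(z)$ is a monic polynomial of degree $p$ and has exactly $p$ zeros counted with multiplicity. I will show that, for $p$ sufficiently large, $g$ has no zero on $|z|=1.7$ and exactly $p-1$ zeros in the open disc $|z|<1.7$; the one remaining zero is then the unique $\rho$ with $|\rho|\ge 1.7$. (Since $g(\rho)=0$ forces $|\rho-q|=1/|f(\rho)|$, the lower bound on $|f|$ obtained below also shows $\rho\to q$ as $p\to\infty$, which is what the later lemmas will need.)

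For the Rouch\'e step, write $g(z)=zf(z)-qf(z)+1$ and compare $g$ with $h(z):=-qf(z)$, a degree-$(p-1)$ polynomial having the same zeros as $f$. On $|z|=1.7$ we have
\[
|g(z)-h(z)|=|zf(z)+1|\le 1.7\,|f(z)|+1\qquad\text{and}\qquad |h(z)|=q\,|f(z)|,
\]
so $|g-h|<|h|$ on the circle as soon as $(q-1.7)\,|f(z)|>1$, that is, as soon as $|f(z)|>\tfrac{1}{q-1.7}$; since $q\ge 2$ this is a fixed constant, at most $\tfrac{1}{0.3}$. Granting both this bound and the claim that every zero of $f$ lies in $|z|<1.7$ (so that $h$ contributes exactly $p-1$ zeros to the disc), Rouch\'e's theorem yields that $g$ also has exactly $p-1$ zeros in $|z|<1.7$, finishing the proof.

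Everything thus reduces to one estimate: for $p$ large and all $z$ with $|z|\ge 1.7$, $|f(z)|$ is bounded below by a quantity tending to $\infty$ with $p$ (in particular $f(z)\neq 0$ and $|f(z)|>\tfrac{1}{q-1.7}$). I would prove this using the autocorrelation structure, splitting on the smallest positive period $\pi$ of $P$. If $\pi=1$, then $P$ is a single repeated letter, every prefix of $P$ equals the suffix of the same length, so all $y_i=1$ and $f(z)=1+z+\cdots+z^{p-1}=\frac{z^p-1}{z-1}$; hence for $|z|=R\ge 1.7$, $|f(z)|\ge\frac{R^p-1}{R+1}$, and every zero of $f$ is a root of unity. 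If instead $\pi\ge 2$ (including the unbordered case $\pi=p$, where $f(z)=z^{p-1}$), then by minimality none of $1,\dots,\pi-1$ is a period of $P$, so $y_2=\cdots=y_\pi=0$ and $f(z)=z^{p-1}+r(z)$ with $r$ a polynomial having coefficients in $\{0,1\}$ and degree at most $p-\pi-1$; then for $|z|=R\ge 1.7$,
\[
|f(z)|\ge R^{p-1}-\sum_{k=0}^{p-\pi-1}R^k=R^{p-1}-\frac{R^{p-\pi}-1}{R-1}>\Bigl(1-\frac{1}{(R-1)R^{\pi-1}}\Bigr)R^{p-1},
\]
and since $\pi\ge 2$ and $R\ge 1.7$ give $(R-1)R^{\pi-1}\ge (R-1)R\ge(0.7)(1.7)>1$, this is a positive constant multiple of $R^{p-1}$. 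In both cases $f$ is nonvanishing on $|z|\ge 1.7$ and is bounded below on $|z|=1.7$ by a quantity growing with $p$, as needed.

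I expect the real obstacle to be precisely this last estimate — that $f$, and hence $g$, cannot be small on $|z|=1.7$. A termwise bound on $f$ is too crude for highly periodic prefixes (it gives nothing when $P=a^p$), so the proof has to exploit the periodicity ``gap'' just below the leading coefficient of the autocorrelation, and must peel off the extreme case $\pi=1$ and handle it through the closed form for $f$. Once this is in place, the Rouch\'e computation and the counting of zeros are routine, and the hypothesis that $p$ is sufficiently large is used only to push these lower bounds past the fixed constant $\tfrac{1}{q-1.7}$.
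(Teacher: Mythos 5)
Your proof is correct, but note that the paper itself offers no proof of this statement: Lemmas~\ref{rho_def}--\ref{R_Q_bounds} are simply quoted from Guibas and Odlyzko (Lemmas 3--6 of \cite{GO78}), so the comparison is with their original argument rather than with anything in this paper. Your route --- Rouch\'e on $|z|=1.7$ comparing $g(z)=1+(z-q)f(z)$ with $-qf(z)$, reduced to the single estimate that the correlation polynomial is bounded below by a constant multiple of $|z|^{p-1}$ on $|z|\ge 1.7$ --- is the same basic mechanism as in \cite{GO78}, but your handling of the key estimate is pleasantly economical: split on the minimal period $\pi$ of $P$, use the closed form $f(z)=(z^p-1)/(z-1)$ when $\pi=1$, and when $\pi\ge 2$ use the coefficient gap $y_2=\cdots=y_\pi=0$ together with a geometric-series bound, where $(R-1)R^{\pi-1}\ge(0.7)(1.7)>1$ is exactly what makes the triangle inequality close. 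The individual steps check out: $y_1=1$, so $g$ is monic of degree $p$ (the paper's displayed definition of $y_i$ contains an index typo, but your reading agrees with its later remark that $Q$ begins with a $1$, and with \cite{GO78}); the Rouch\'e inequality only requires $|f|>1/(q-1.7)\le 10/3$ on the circle, which your lower bounds supply uniformly in $P$ once $p$ is large; all $p-1$ zeros of $f$ lie in $|z|<1.7$, so $g$ has exactly $p-1$ zeros there and none on the circle, leaving exactly one zero $\rho$ with $|\rho|\ge 1.7$. What your argument buys is self-containedness: the existence and uniqueness of $\rho$ need not be imported from \cite{GO78}, though the quantitative expansions of $\ln\rho$ and $\ln R_Q$ used later still are.
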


Since there is only one such root $\rho$, it follows that this $\rho$ is real.
In what follows, the quantity $\rho$ is the $\rho$ specified by the
previous lemma.

\begin{lemma}\label{rho_bounds}
If $p$ is sufficiently large, then
\[
\ln \rho=\ln q-\frac{1}{qf(q)}-\frac{f'(q)}{qf(q)^3}-\frac{1}{2q^2f(q)^2}+O\left(\frac{p^2}{q^{3p}}\right).
\]
\end{lemma}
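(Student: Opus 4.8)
The plan is to start from the defining equation $1 + (\rho - q)f(\rho) = 0$ for the zero $\rho$ guaranteed by Lemma~\ref{rho_def}, and to solve it perturbatively around $z = q$. Rewriting the equation as
\[
\rho = q - \frac{1}{f(\rho)},
\]
we see that $\rho$ is close to $q$, since $f(q)$ is of order $q^{p-1}$ (the leading term of $f$ is $z^{p-1}$, coming from $y_1 = 1$), so $1/f(\rho)$ is exponentially small in $p$. Thus I would first establish a crude bound $\rho = q + O(q^{1-p})$, or more precisely $\rho = q - 1/f(q) + (\text{smaller})$, and then bootstrap: substitute this approximation back into $f(\rho)$, expand $f(\rho) = f(q) + (\rho - q)f'(q) + \tfrac12(\rho-q)^2 f''(q) + \cdots$ using Taylor's theorem, and iterate until the error term is pushed down to $O(p^2/q^{3p})$.

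The key steps, in order, are: (1) confirm $\rho \to q$ as $p \to \infty$ and extract the first-order term $\rho - q \approx -1/f(q)$; (2) write $1/f(\rho) = \frac{1}{f(q)}\bigl(1 + (\rho-q)\frac{f'(q)}{f(q)} + \cdots\bigr)^{-1}$ and expand the reciprocal as a geometric-type series, keeping enough terms; (3) substitute the first-order value of $\rho - q$ into these correction terms to get $\rho - q$ to the required precision, tracking that $f(q), f'(q)$ and $f''(q)$ are $O(q^{p-1}), O(q^{p-2} \cdot p)$ and $O(q^{p-3} \cdot p^2)$ respectively, so that the neglected terms are genuinely $O(p^2/q^{3p})$; (4) finally take logarithms: write $\ln\rho = \ln q + \ln\bigl(1 + (\rho - q)/q\bigr)$ and expand $\ln(1+x) = x - x^2/2 + \cdots$ with $x = (\rho-q)/q$, again substituting the expansion of $\rho - q$ and collecting terms of each order. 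The terms $-\frac{1}{qf(q)}$, $-\frac{f'(q)}{qf(q)^3}$, and $-\frac{1}{2q^2 f(q)^2}$ should fall out as, respectively, the first-order contribution $x$, the correction to $\rho - q$ from the $f'$ term in step (2)–(3), and the $-x^2/2$ term in the logarithm expansion.

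The main obstacle is bookkeeping: keeping careful track of which powers of $p$ and $q$ appear in $f(q)$, $f'(q)$, $f''(q)$ and their combinations, so as to be certain that every discarded term is $O(p^2/q^{3p})$ and not, say, $O(p^3/q^{3p})$ or $O(p/q^{2p})$. Since $f$ is a polynomial of degree $p-1$ with $0/1$ coefficients, one has the bounds $q^{p-1} \le f(q) \le q^{p-1} + q^{p-2} + \cdots + 1 < \frac{q^p}{q-1}$, and similarly $|f^{(k)}(q)| \le p^k q^{p-1-k}$ up to constants; these crude estimates suffice to control all error terms, but one must apply them consistently at each stage of the iteration. No delicate analysis is needed beyond Taylor expansion and geometric series — the content of the lemma is precisely this asymptotic unwinding of the algebraic equation defining $\rho$.
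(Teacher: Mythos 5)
The paper does not actually prove this lemma: it is one of the results imported verbatim from Guibas and Odlyzko (Lemmas 3--6 of \cite{GO78}), so the only "proof" in the paper is the citation. Your bootstrap of the equation $1+(\rho-q)f(\rho)=0$ is essentially the argument of Guibas--Odlyzko itself, and in outline it is correct: writing $\epsilon=\rho-q=-1/f(q+\epsilon)$, expanding $1/f(q+\epsilon)=\frac{1}{f(q)}\bigl(1-\epsilon f'(q)/f(q)+O(p^2/q^{2p})\bigr)$ and iterating gives $\epsilon=-\frac{1}{f(q)}-\frac{f'(q)}{f(q)^3}+O(p^2/q^{3p-1})$, and then $\ln\rho=\ln q+\ln(1+\epsilon/q)=\ln q+\frac{\epsilon}{q}-\frac{\epsilon^2}{2q^2}+O(q^{-3p})$ produces exactly the three stated terms, with all discarded quantities of size $O(p^2/q^{3p})$ once one uses $q^{p-1}\le f(q)\le \frac{q}{q-1}q^{p-1}$, $f'(q)=O(pq^{p-2})$, $f''(q)=O(p^2q^{p-3})$ (constants may depend on the fixed $q$, which is all the lemma requires). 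Your attribution of the three terms to, respectively, the first-order value of $\epsilon/q$, the $f'$ correction to $\epsilon$, and the $-x^2/2$ term of the logarithm is also right.

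The one step you should not wave at is your step (1), the initial localization $\rho=q+O(q^{1-p})$. Lemma~\ref{rho_def} only tells you $|\rho|\ge 1.7$, and for $|z|$ that small the triangle inequality does \emph{not} force $|f(z)|$ to be large (the lower-order terms of $f$ can swamp $|z|^{p-1}$ when $|z|-1<1$), so the fixed-point equation $\rho=q-1/f(\rho)$ by itself does not yet show $\rho$ is near $q$. A clean fix: on the real axis, $1+(z-q)f(z)$ equals $1$ at $z=q$ and is negative at $z=q-2q^{1-p}$ (there $(q-z)f(z)\ge 2q^{1-p}(q-2q^{1-p})^{p-1}>1$ for $p$ large), so the intermediate value theorem gives a real zero in $(q-2q^{1-p},q)$; since its modulus exceeds $1.7$, the uniqueness in Lemma~\ref{rho_def} identifies it with $\rho$. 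With that supplied, your iteration goes through and yields the lemma.
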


Define $R_Q$ by
\[
R_Q\rho = \frac{(q-\rho)^2\rho^{p-1}}{1-(q-\rho)^2f'(\rho)}.
\]

\begin{lemma}\label{G_formula}
$G(N)=R_Q\rho^N + O((1.7)^N)$
\end{lemma}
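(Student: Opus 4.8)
The plan is to derive a closed rational form for the generating function $\mathcal{G}(z)=\sum_{N}G(N)z^{N}$, identify its denominator (up to reversal of coefficients) with $1+(z-q)f(z)$, and then extract the asymptotics of $G(N)$ by partial fractions, using Lemmas~\ref{rho_def} and~\ref{rho_bounds} to locate the dominant singularity.

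First I would obtain $\mathcal{G}(z)$ by a renewal decomposition. Consider the relaxed count $D(N)=\#\{\,a_{1}\cdots a_{N+p}\ :\ a_{k}\cdots a_{k+p-1}=P\text{ for }k=1\text{ and }k=N+1\,\}$, with no constraint on intermediate positions; one has $D(N)=q^{N-p}$ for $N\ge p$, and for $N<p$, $D(N)$ is $1$ or $0$ according to whether $N$ is a period of $P$, so $\sum_{N}D(N)z^{N}=\dfrac{z^{p}}{1-qz}+\pi(z)$ where $\pi$ is a polynomial of degree $<p$ recording the autocorrelation of $P$. Every word counted by $D(N)$ factors uniquely as a chain of ``irreducible'' blocks — the maximal stretches between consecutive occurrences of $P$, glued along their shared copies of $P$ — and a single irreducible block of width $M$ is exactly a word counted by $G(M)$; hence $\sum_{N}D(N)z^{N}=\sum_{r\ge 1}\mathcal{G}(z)^{r}=\mathcal{G}(z)/(1-\mathcal{G}(z))$. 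Solving and clearing denominators, $\mathcal{G}(z)=\dfrac{z^{p}+(1-qz)\pi(z)}{z^{p}+(1-qz)(1+\pi(z))}$. A short computation with the reversal map $h(z)\mapsto z^{p}h(1/z)$ then shows that the denominator of $\mathcal{G}$ reverses to $1+(z-q)f(z)$ and the numerator reverses to $1+(z-q)f(z)-(z-q)z^{p-1}$.

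Next, by Lemma~\ref{rho_def} the polynomial $1+(z-q)f(z)$ has a unique zero $\rho$ with $|\rho|\ge 1.7$, and by Lemma~\ref{rho_bounds} $\rho$ is real and close to $q$; since every other zero has modulus $<1.7$, the reversal identity shows $\mathcal{G}$ has exactly one pole of smallest modulus, a simple pole at $z=1/\rho$ — simple because the derivative of $1+(z-q)f(z)$ at $\rho$ equals $\bigl(1-(q-\rho)^{2}f'(\rho)\bigr)/(q-\rho)$, nonzero for $p$ large by the estimates behind Lemma~\ref{rho_bounds} — while all other poles have modulus $>1/1.7$. Writing $\mathcal{G}(z)=\dfrac{A}{1-\rho z}+\mathcal{G}_{1}(z)$ with $\mathcal{G}_{1}$ analytic in a disk of radius $>1/1.7$, one gets $[z^{N}]\mathcal{G}_{1}=O\bigl((1.7)^{N}\bigr)$, hence $G(N)=A\rho^{N}+O\bigl((1.7)^{N}\bigr)$ (the implied constant depending on $P$).

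It remains to identify $A=-\rho\cdot\operatorname{Res}_{z=1/\rho}\mathcal{G}(z)$. Evaluating the numerator and denominator of $\mathcal{G}$ at $z=1/\rho$ through the reversal identities turns them into $1+(z-q)f(z)-(z-q)z^{p-1}$ and a multiple of the derivative of $1+(z-q)f(z)$, both at $z=\rho$; using $1+(\rho-q)f(\rho)=0$ to eliminate $f(\rho)$, the ratio collapses to $A=R_{Q}$, i.e.\ $R_{Q}\rho=(q-\rho)^{2}\rho^{p-1}/\bigl(1-(q-\rho)^{2}f'(\rho)\bigr)$. The main obstacle is the bookkeeping across the first and last steps: fixing the correction polynomial $\pi$ and the numerator of $\mathcal{G}$ precisely, matching the denominator to the reversal of $1+(z-q)f(z)$, and carrying out the residue simplification so that it produces exactly the stated $R_{Q}$; the singularity analysis in between is routine once Lemmas~\ref{rho_def} and~\ref{rho_bounds} are available. (If the eventual application needs an error term with constant uniform in $p$, one bounds the secondary poles using a quantitative sharpening of Lemma~\ref{rho_def} — the next lemma of \cite{GO78}.)
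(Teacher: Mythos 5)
Your argument is correct: the renewal decomposition giving $\mathcal{G}=D/(1+D)$, the reversal identity sending the denominator to $1+(z-q)f(z)$ and the numerator to $1+(z-q)f(z)-(z-q)z^{p-1}$, and the residue computation at the simple dominant pole $z=1/\rho$ (using $f(\rho)=1/(q-\rho)$, so that the derivative is $\bigl(1-(q-\rho)^2f'(\rho)\bigr)/(q-\rho)$) all check out and yield exactly $A=R_Q$. The paper itself offers no proof of this lemma---it is quoted verbatim as one of Lemmas~3--6 of Guibas and Odlyzko \cite{GO78}---and your proposal is essentially a reconstruction of their correlation-polynomial/generating-function argument, so the only point worth flagging is the one you already note: uniformity of the $O((1.7)^N)$ constant in $p$ requires their quantitative control of the secondary zeros.
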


\begin{lemma}\label{R_Q_bounds}
If $p$ is sufficiently large, then
\[
\ln R_Q = (p-2)\ln q -2\ln(f(q))+\frac{3f'(q)}{f(q)^2}-\frac{p-2}{qf(q)}+O\left(\frac{p^2}{q^{2p}}\right).
\]
\end{lemma}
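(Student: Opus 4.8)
The plan is to compute $\ln R_Q$ directly from the relation defining $R_Q$ and to reduce the estimate to Lemma~\ref{rho_bounds} together with a couple of low-order Taylor expansions of $f$ about the point $q$. Solving the defining relation for $R_Q$ gives $R_Q = (q-\rho)^2\rho^{p-2}\big/\bigl(1-(q-\rho)^2 f'(\rho)\bigr)$, so that
\[
\ln R_Q = (p-2)\ln\rho + 2\ln(q-\rho) - \ln\bigl(1-(q-\rho)^2 f'(\rho)\bigr).
\]
The essential simplification is that, by Lemma~\ref{rho_def}, $\rho$ is a zero of $1+(z-q)f(z)$, so $(q-\rho)f(\rho)=1$; hence $q-\rho = 1/f(\rho)$, which turns the middle term into $-2\ln f(\rho)$ and the last term into $-\ln\bigl(1-f'(\rho)/f(\rho)^2\bigr)$. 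The whole problem is then to expand $\ln\rho$, $\ln f(\rho)$ and $\ln\bigl(1-f'(\rho)/f(\rho)^2\bigr)$ to sufficient accuracy.

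For $(p-2)\ln\rho$ I would simply substitute Lemma~\ref{rho_bounds}: its two leading pieces contribute precisely $(p-2)\ln q$ and $-(p-2)/(qf(q))$, and everything else it contributes must be absorbed into the final error term. For the other two pieces I would expand about $z=q$: since $\rho-q = -1/f(\rho)$ and $f(\rho)$ is close to $f(q)$, one gets $\rho - q = -1/f(q)$ up to a smaller error, hence $\ln f(\rho) = \ln f(q) - f'(q)/f(q)^2$ and $f'(\rho)/f(\rho)^2 = f'(q)/f(q)^2$, each up to smaller errors. Feeding these in, the two transformed terms combine to $-2\ln f(q) + 2f'(q)/f(q)^2 + f'(q)/f(q)^2 = -2\ln f(q) + 3f'(q)/f(q)^2$, which is exactly the pair of terms appearing in the statement, and together with the leading pieces of $(p-2)\ln\rho$ this reproduces the claimed main expression.

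The step I expect to be the main obstacle is showing that all of the accumulated lower-order contributions really do amount to $O(p^2/q^{2p})$. The tools for this are the elementary bounds $f^{(k)}(q) = O\bigl((p/q)^k f(q)\bigr)$ — valid because the coefficients $y_i$ of $f$ lie in $\{0,1\}$ and $\deg f\le p-1$ — together with the size estimate on $f(q)$ available for the prefixes $P$ under consideration. With these one checks, term by term, that each cross term and higher-order term in the expansions above, as well as the $O(p^3/q^{3p})$ term inherited (after multiplication by $p$) from Lemma~\ref{rho_bounds}, is $O(p^2/q^{2p})$. This is routine but somewhat lengthy bookkeeping, and it constitutes essentially the only nontrivial content of the argument.
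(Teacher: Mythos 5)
Your derivation is correct, but note that the paper itself contains no proof of this statement: it is quoted as one of Lemmas~3--6 of Guibas and Odlyzko \cite{GO78}, so the only ``proof in the paper'' is a citation. What you propose is essentially a reconstruction of the Guibas--Odlyzko argument, and it works: since $1+(\rho-q)f(\rho)=0$ gives $(q-\rho)f(\rho)=1$, one has $\ln R_Q=(p-2)\ln\rho-2\ln f(\rho)-\ln\bigl(1-f'(\rho)/f(\rho)^2\bigr)$; Lemma~\ref{rho_bounds} handles the first term, and one-step Taylor expansions of $f$ and $f'$ at $q$ handle the other two. The bookkeeping you defer does go through with exactly the tools you name: $f(q)\geq q^{p-1}$ (since $y_1=1$), $f^{(k)}(q)=O(p^k q^{p-1-k})$, and $|\rho-q|=1/f(\rho)=O(q^{-(p-1)})$, from which $(p-2)$ times the last three terms of Lemma~\ref{rho_bounds}, the second-order Taylor remainders, and the quadratic term in $-\ln(1-x)$ are each $O(p^2/q^{2p})$, so the surviving contributions are precisely $(p-2)\ln q-(p-2)/(qf(q))-2\ln f(q)+3f'(q)/f(q)^2$ as claimed. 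The one thing you use silently is that $\rho$ is real with $0<q-\rho$, so that the logarithms are real and $\ln\bigl((q-\rho)^2\bigr)=-2\ln f(\rho)$ is legitimate; this is not stated in Lemma~\ref{rho_def} as quoted, but it follows from the real asymptotics in Lemma~\ref{rho_bounds} together with $q-\rho=1/f(\rho)>0$ for real positive $\rho$, and it is established explicitly in \cite{GO78}. So there is no gap, only routine estimates that you have correctly anticipated; compared with the paper, which simply imports the lemma, your route makes the dependence on Lemma~\ref{rho_bounds} and on the defining equation of $\rho$ explicit.
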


In what follows, $c$'s, $d$'s and Greek letters denote positive
constants.

\begin{lemma}\label{G_lower_bound}
Let $p$ be the unique integer such that
\[
\frac{\ln q}{q-1}q^p \leq N<\frac{\ln q}{q-1}q^{p+1}.
\]
Let $P$ be a prefix of length $p$ and let $n=N+p$.  There exist
constants $N_0$ and $d$ such that for $N>N_0$ we have
$$G_P(N) \geq dq^n/n^2.$$  (The constant $d$ may depend on $q$ but not
on $N$.)
\end{lemma}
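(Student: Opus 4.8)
\emph{Proof sketch.} The plan is to choose the prefix $P$ so that its autocorrelation polynomial $f$ is essentially as large as possible at $z=q$, to evaluate the Guibas--Odlyzko asymptotics (Lemmas~\ref{rho_bounds}, \ref{G_formula} and \ref{R_Q_bounds}) for that $P$, and then to convert the resulting estimate $G_P(N)=\Theta(q^{N-p})$ into the claimed bound $\Theta(q^{n}/n^{2})$ using the relation between $q^{p}$ and $N$ built into the choice of $p$. Concretely, I would take $P=c^{p}$, the word consisting of $p$ copies of a single letter $c$ (this word is privileged, which is what one needs when the lemma is applied, but it plays no role in the estimate itself). For $P=c^{p}$ we have $y_{i}=1$ for every $i$, so $f(z)=z^{p-1}+\cdots+z+1=(z^{p}-1)/(z-1)$, hence $f(q)=(q^{p}-1)/(q-1)=\Theta(q^{p-1})$ and $f'(q)=O(p\,q^{p-2})$. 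The defining inequality for $p$ forces $q^{p}>\tfrac{q-1}{q\ln q}N$, so $p\to\infty$ with $N$; thus for $N$ large $p$ is ``sufficiently large'' in the sense of the lemmas, and by Lemma~\ref{rho_bounds} the root $\rho$ of Lemma~\ref{rho_def} satisfies $\rho=q-(q-1)q^{-p}+o(q^{-p})$, in particular $\rho\to q$.

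Next I would plug these values into the two asymptotic formulas. In Lemma~\ref{rho_bounds}, with $f(q)=(q^{p}-1)/(q-1)$, all correction terms beyond the first are $O(p/q^{2p})$ and the first equals $-\tfrac{q-1}{q^{p+1}}\bigl(1+O(q^{-p})\bigr)$, so $\ln\rho=\ln q-\tfrac{q-1}{q^{p+1}}+O(p/q^{2p})$. Since $N<\tfrac{\ln q}{q-1}q^{p+1}$, the quantity $N\cdot\tfrac{q-1}{q^{p+1}}$ is at most $\ln q$, whence $N\ln\rho>N\ln q-\ln q-o(1)$ and therefore $\rho^{N}\ge\tfrac{1}{2q}\,q^{N}$ for $N$ large. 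In Lemma~\ref{R_Q_bounds}, with $\ln f(q)=p\ln q-\ln(q-1)+O(q^{-p})$ and with $\tfrac{3f'(q)}{f(q)^{2}}=o(1)$, $\tfrac{p-2}{qf(q)}=o(1)$, the leading terms give $\ln R_{Q}=(p-2)\ln q-2p\ln q+2\ln(q-1)+o(1)=-(p+2)\ln q+2\ln(q-1)+o(1)$, so $R_{Q}\ge\tfrac{(q-1)^{2}}{2q^{p+2}}$ for $N$ large.

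To finish, I would combine these estimates with Lemma~\ref{G_formula}. Multiplying the two bounds gives $R_{Q}\rho^{N}\ge\tfrac{(q-1)^{2}}{4q^{3}}\,q^{N-p}$. From $N\ge\tfrac{\ln q}{q-1}q^{p}$ we get $q^{p}\le\tfrac{q-1}{\ln q}N\le\tfrac{q-1}{\ln q}n$, hence $q^{2p}\le\bigl(\tfrac{q-1}{\ln q}\bigr)^{2}n^{2}$, so
\[
q^{N-p}=\frac{q^{N+p}}{q^{2p}}=\frac{q^{n}}{q^{2p}}\ge\Bigl(\frac{\ln q}{q-1}\Bigr)^{2}\frac{q^{n}}{n^{2}},
\]
whence $R_{Q}\rho^{N}\ge\tfrac{(\ln q)^{2}}{4q^{3}}\cdot\tfrac{q^{n}}{n^{2}}$. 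Finally $q^{N-p}/1.7^{N}=(q/1.7)^{N}/q^{p}\to\infty$, since $q\ge 2>1.7$ and $q^{p}$ is polynomial in $N$, so the error term $O(1.7^{N})$ of Lemma~\ref{G_formula} is eventually at most $\tfrac12 R_{Q}\rho^{N}$; hence $G_{P}(N)=G(N)\ge\tfrac{(\ln q)^{2}}{8q^{3}}\cdot\tfrac{q^{n}}{n^{2}}$ for $N$ large, and one takes $d=(\ln q)^{2}/(8q^{3})$ with $N_{0}$ correspondingly. The only real work is the bookkeeping in the estimates above: checking that every lower-order term in Lemmas~\ref{rho_bounds} and \ref{R_Q_bounds} is genuinely negligible. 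That is also where the choice $P=c^{p}$ is essential---a prefix whose autocorrelation polynomial is bounded (say $c^{p-1}d$, for which $f\equiv 1$) keeps $\rho$ a fixed factor below $q$ and makes $G_{P}(N)=o(q^{n}/n^{2})$ (and for $q=2$ even Lemma~\ref{rho_def} fails), so one must use a highly self-overlapping prefix.
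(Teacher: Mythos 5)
Your computation for the single prefix $P=c^{p}$ is essentially sound, but it proves a strictly weaker statement than the lemma, which asserts $G_P(N)\geq dq^{n}/n^{2}$ for an \emph{arbitrary} prefix $P$ of length $p$, with $d$ independent of $P$. That uniformity is not a technicality: in the proof of Theorem~\ref{main} the lemma is applied to \emph{every} privileged prefix of length $p$, and the per-prefix bound is multiplied by the number $B(p,q)\geq c_1q^{p}/p^{2}\approx N/(\log_q N)^{2}$ of such prefixes; this is exactly where the improvement from $q^{n}/n^{2}$ to $q^{n}/(n(\log_q n)^{2})$ comes from. With only $P=c^{p}$ (even allowing the $q$ choices of the letter $c$) you recover nothing beyond a bound of order $q^{n}/n^{2}$ for the total count, i.e.\ no asymptotic improvement over Forsyth--Jayakumar--Shallit.

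Your closing assertion --- that the estimate genuinely fails for a non-self-overlapping prefix such as $c^{p-1}d$, for which you claim $f\equiv 1$, $\rho$ bounded a fixed factor below $q$, and (for $q=2$) failure of Lemma~\ref{rho_def} --- rests on a mis-normalization of the correlation polynomial, and it is this error that makes you think the special choice $P=c^{p}$ is essential. Under the paper's definition $f(z)=\sum_{i=1}^{p}y_iz^{p-i}$ with $y_1=1$ for every word, so $f(q)\geq q^{p-1}$ for \emph{every} prefix; for $c^{p-1}d$ one gets $f(z)=z^{p-1}$, the relevant root satisfies $\ln\rho=\ln q-q^{-p}+\cdots$, hence $\rho\to q$, Lemma~\ref{rho_def} holds, and $G_P(N)=\Theta(q^{N-p})=\Theta(q^{n}/n^{2})$ just as for $c^{p}$. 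Indeed the whole point of the choice of $p$ (so that $N=\Theta(q^{p})$) is that the correction terms in Lemmas~\ref{rho_bounds} and~\ref{R_Q_bounds}, namely $\frac{N}{qf(q)}$, $\frac{Nf'(q)}{qf(q)^{3}}$, $\frac{N}{2q^{2}f(q)^{2}}$, $\frac{p-2}{qf(q)}$ and $\frac{3f'(q)}{f(q)^{2}}$, are bounded uniformly over all $P$ of length $p$ because $f(q)\geq q^{p-1}$, while the universal upper bound $f(q)\leq (q^{p}-1)/(q-1)$ gives $q^{N+p-2}/f(q)^{2}\geq c\,q^{N-p}$ uniformly as well; this two-sided bound on $f(q)$ is how the paper handles all prefixes at once. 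To repair your argument, replace the explicit evaluation of $f$ at $P=c^{p}$ by these two inequalities (valid for every $P$); the rest of your bookkeeping, including the disposal of the $O((1.7)^{N})$ term from Lemma~\ref{G_formula}, then goes through unchanged.
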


\begin{proof}
If $p=\lfloor\log_q N +\log_q(q-1)-\log_q(\ln q)\rfloor$, then
$G(N)=R_Q\rho^N + O((1.7)^N)$ by applying Lemma~\ref{G_formula}.  By
Lemmas~\ref{rho_bounds} and \ref{R_Q_bounds} we have
\begin{eqnarray*}
\ln(R_Q\rho^N)&=&\ln R_Q + N\ln \rho\\ 
&=&(p-2)\ln q -2\ln(f(q))+\frac{3f'(q)}{f(q)^2}-\frac{p-2}{qf(q)}+O\left(\frac{p^2}{q^{2p}}\right)\\
&+&N\ln q-\frac{N}{qf(q)}-\frac{Nf'(q)}{qf(q)^3}-\frac{N}{2q^2f(q)^2}+O\left(\frac{Np^2}{q^{3p}}\right).
\end{eqnarray*}
Therefore,
\begin{eqnarray*}
G(N)&=&R_Q\rho^N + O((1.7)^N)=\exp(\ln(R_Q\rho^N)) + O((1.7)^N)\\
&=&\frac{q^{N+p-2}}{f(q)^2}\exp\left(\frac{3f'(q)}{f(q)^2}-\frac{p-2}{qf(q)}-\frac{N}{qf(q)}-\frac{Nf'(q)}{qf(q)^3}-\frac{N}{2q^2f(q)^2}+O\left(\frac{Np^2}{q^{3p}}+\frac{p^2}{q^{2p}}\right)\right)\\
&\phantom{=}&\hspace{2ex} \mathop{+} O((1.7)^N).
\end{eqnarray*}

Since the first digit of $Q$ will always be a 1, $f(q)$ will have the leading term $q^{p-1}$.  Let $\alpha, \beta, \gamma, \delta$, and $N_0$ be positive constants such that the following inequalities are valid for all $N \geq N_0$.

$$0\leq\frac{3f'(q)}{f(q)^2} \leq3 \frac{(p-1)^2q^{p-2}}{q^{2p-2}}\leq\frac{3(p-1)^2}{q^p}\leq 2$$

$$\frac{p-2}{qf(q)}\leq\frac{p-2}{q^p}\leq\frac{p}{q^p}\leq 1/q$$

$$\frac{N}{qf(q)}\leq \frac{N}{q^p}\leq\frac{N}{q^{\lfloor{\log_q N
      -\log_q(\ln q)}\rfloor}}\leq \alpha\frac{N}{q^{\log_q N
    -\log_q(\ln q)}}\leq \alpha\frac{N\ln q}{N} \leq \beta$$

$$\frac{Nf'(q)}{qf(q)^3}\leq \frac{N(p-1)^2q^{p-2}}{q^{3p-2}}\leq \frac{N(p-1)^2}{q^{2p}}\leq \frac{N}{q^p}\frac{(p-1)^2}{q^p}\leq 2\beta$$

$$\frac{N}{2q^2f(q)^2}\leq\frac{N}{2q^{2p}}\leq\frac{N}{2q^{2\lfloor{\log_q
      N -\log_q(\ln q)}\rfloor}}\leq\frac{\delta N}{q^{2\log_q N }}\leq\frac{\delta N}{N^2} \leq\delta$$

$$\left| O\left(\frac{Np^2}{q^{3p}}+\frac{p^2}{q^{2p}}\right) \right|
= \left| O\left(\frac{p^2}{q^{2p}}\right) \right| \leq \gamma.$$

Thus 
\begin{eqnarray*}
G_{P}(N)&=&\frac{q^{N+p-2}}{f(q)^2}\exp\left(\frac{3f'(q)}{f(q)^2}-\frac{p-2}{qf(q)}-\frac{N}{qf(q)}-\frac{Nf'(q)}{qf(q)^3}-\frac{N}{2q^2f(q)^2}+O\left(\frac{Np^2}{q^{3p}}+\frac{p^2}{q^{2p}}\right)\right)\\
&\phantom{=}&\hspace{2ex} \mathop{+} O((1.7)^N)\\
&\geq& \frac{d_1q^{N+p-2}}{f(q)^2}\geq\frac{d_1q^{N+p-2}}{((1-q^p)/(1-q))^2}\geq \frac{d_2q^{N+p-2}}{(1-q^p)^2}
\geq\frac{d_2q^{N+p-2}}{q^{2p}-2q^p+1}\\
&\geq&\frac{d_2q^{N+p-2}}{q^{2p}+1}\geq\frac{d_2q^{N+p-2}}{2q^{2p}}
\geq\frac{d_3q^N}{q^p}\geq\frac{d_3q^N}{q^{\lfloor{\log_q N +\log_q(q-1)-\log_q(\ln q)}\rfloor}}\\
&\geq& \frac{d_3q^N}{q^{\log_q N +\log_q(q-1)-\log_q(\ln q)}}\geq\frac{d_3q^N\ln q}{N(q-1)}
\geq\frac{d_4q^N}{N}\geq\frac{d_4q^{n-p}}{n}\\
&\geq&\frac{d_4q^{n-(\log_q N +\log_q(q-1)-\log_q(\ln q))}}{n}\geq\frac{d_4q^n\ln q}{nN(q-1)}
\geq\frac{dq^n}{n^2}.
\end{eqnarray*}
\end{proof}

We can now complete the proof of Theorem~\ref{main}.

\begin{proof}[Proof of Theorem~\ref{main}]
We define the function $B(n,q)$ as the number of privileged words of
length $n$ over an alphabet of size $q \geq 2$. Let $n=N+p$ where
$p=\lfloor{\log_q N +\log_q(q-1)-\log_q(\ln q)}\rfloor$.  Let $n_0$ be
a constant such that whenever $n \geq n_0$ we have $p \geq N_0$, where
$N_0$ is the constant mentioned in Lemma~\ref{G_lower_bound}.  Then
for $n \geq n_0$, we have
\begin{eqnarray*}B(n,q)&\geq& \sum_{\substack{P \text{ privileged}\\ |P| = p}} G_P(N)\\
&\geq& \left(\frac{dq^n}{n^2}\right)B(\lfloor{\log_q N +\log_q(q-1)-\log_q(\ln q)}\rfloor,q)\\
&\geq&\left(\frac{dq^n}{n^2}\right)\left(\frac{c_1q^{\lfloor{\log_q N +\log_q(q-1)-\log_q(\ln q)}\rfloor}}{(\lfloor{\log_q N +\log_q(q-1)-\log_q(\ln q)}\rfloor)^2}\right)\\
&\geq&\left(\frac{c_2q^n}{n^2}\right)\left(\frac{q^{\log_q N +\log_q(q-1)-
\log_q(\ln q)}}{(\log_q N +\log_q(q-1)-\log_q(\ln q))^2}\right)\\
&\geq&\left(\frac{c_2q^n}{n^2}\right)\left(\frac{N(q-1)}{(\ln q)(1+\log_q N)^2}\right)\\
&\geq&\left(\frac{c_3q^n}{n^2}\right)\left(\frac{N}{(\log_q q+\log_q N)^2}\right)\\
&\geq&\left(\frac{c_3q^n}{n^2}\right)\left(\frac{N}{(\log_q qN)^2}\right)\\
&\geq&\left(\frac{c_3q^n}{n^2}\right)\left(\frac{n}{(\log_q qn)^2}-\frac{p}{(\log_q qn)^2}\right)\\
&\geq&\left(\frac{c_3q^n}{n^2}\right)\left(\frac{n}{(\log_q qn)^2}-\frac{\log_q n}{(\log_q qn)^2}-\frac{1}{(\log_q qn)^2}\right)\\
&\geq&\left(\frac{c_3q^n}{n(\log_q n)^2}\right)\left(\frac{(\log_q n)^2}{(\log_q qn)^2}-\frac{(\log_q n)^3}{n(\log_q qn)^2}-\frac{(\log_q n)^2}{n(\log_q qn)^2}\right)\\
&\geq& \frac{cq^n}{n(\log_q n)^2},
\end{eqnarray*}
 since $$\frac{(\log_q n)^2}{(\log_q qn)^2}-\frac{(\log_q n)^3}{n(\log_q qn)^2}-\frac{(\log_q n)^2}{n(\log_q qn)^2}$$ is positive and increases for $n>2$.
This completes the proof.
\end{proof}

\bigskip
\hrule
\bigskip
\noindent 2010 {\it Mathematics Subject Classification}: Primary
68R15.

\noindent \emph{Keywords}: privileged word.
\end{document}